\documentclass[12pt,letterpaper]{article}
\usepackage{graphicx}
\usepackage{amsmath}
\usepackage{amsfonts}
\usepackage{amsthm}
\usepackage{amssymb}
\usepackage{color}

\usepackage{verbatim} 
\usepackage{hyperref}

\newtheorem{theorem}{Theorem}[section]
\newtheorem{lemma}[theorem]{Lemma}
\newtheorem{corollary}[theorem]{Corollary}

\newcommand{\be}{\begin{equation}}
\newcommand{\ee}{\end{equation}}
\newcommand{\bea}{\begin{eqnarray}}
\newcommand{\eea}{\end{eqnarray}}
\newcommand{\beas}{\begin{eqnarray*}}
\newcommand{\eeas}{\end{eqnarray*}}

\begin{document}

\title{Saturated Domino Coverings}

\author{Andrew Buchanan\\Hong Kong \and Tanya Khovanova\\MIT \and Alex Ryba\\Queens College, CUNY}
\date{}
\maketitle

\begin{abstract}
A domino covering of a board
is saturated if no domino is redundant.  
We introduce the
concept of a fragment 
tiling and show that a
minimal fragment tiling always corresponds to a
maximal saturated domino covering.
The size of a minimal fragment tiling is
the domination number of the board.
We define a class of regular boards and show that
for these boards the domination number gives the size of
a minimal X-pentomino covering. 
Natural sequences that count maximal saturated domino coverings
of square and rectangular boards are obtained.  These include
the new sequences 
A193764, A193765, A193766, A193767, and A193768 of OEIS.
\end{abstract}

\section{Introduction}

We consider domino coverings of \emph{boards}.  A board is
a finite set of cells from a square grid. In order to 
have a domino covering every cell of the board should 
be connected to another cell. From now on we only consider such boards.
Our work is motivated by the following problem,
which was given
to us by Rados Radoicic:

\begin{quote}
A 7 by 7 board is covered with 38 dominoes such that each covers 
exactly 2 squares of the board. Prove that it is possible to remove 
one domino so that the remaining 37 still cover the board. 
\end{quote}

Call a domino covering of a board \emph{saturated} if the 
removal of any domino leaves an 
uncovered cell. For a board $B$, let $d(B)$ be 
the number of dominoes in its 
largest saturated covering.  
In particular, when $B$ is an $n \times n$ board
we write $d(n)$ for $d(B)$. 
The problem above asks us to prove that $d(7) < 38$.

In Section~\ref{sec:crosses} we consider two other sorts
of covering. In an X-pentomino covering the tiles are allowed
to overlap and poke out of the board. 
We also consider fragment tilings, in which each tile is a
connected fragment formed from two or more cells of an X-pentomino.
Here, tiles may not overlap or poke outside the board.
We reserve the word \emph{tiling} to mean a 
covering where the tiles do not overlap.

In a fragment tiling,
we can assign a center to each tile.  These centers
form a dominating set for the board.
Dominating sets have been much studied 
\cite{jac, cock, cha, cha-cla, hares, cha-cla-ha, spa, gui, ala}.
A recent preprint \cite{gon} establishes an exact formula for the
size of a minimal dominating set in a rectangular grid.
We prove that minimal fragment tilings correspond to maximal saturated
domino coverings for all boards and correspond to X-pentomino coverings
for regular boards (which include the well studied grids).
In particular this shows that $d(n) = n^2 - \gamma(n)$ where $\gamma(n)$ is 
the domination number of an $n \times n$ board.

In Section \ref{sec:examples}
we determine $d(n)$ for a few small boards. 
In Section~\ref{sec:largeboards} we briefly discuss large boards
and in Section~\ref{sec:aliens} we consider analogous results
for boards on grids made of
triangular and hexagonal cells.  Finally, in Section \ref{sec:dominum}
we discuss domination numbers.

\section{Domino Coverings and X-Pentomino Coverings}\label{sec:crosses}

For any saturated domino covering of any board,  we construct a 
graph on the set $D$ of cells by joining two if a domino links them.
In this graph, any vertex whose degree exceeds 1 
can be adjacent only to vertices
with degree 1. (Otherwise a domino that joins two 
vertices with larger degree
is redundant and can be removed from the 
supposedly \emph{saturated covering}.)
This condition means that connected components 
of our graph are stars.  Since a
square on the board has at most 4 neighbors, 
the only possible connected components correspond to collections
of dominoes with the following shapes (see Figure~\ref{fig:fragments}). 
We term a vertex of maximal degree in a star a center. 
Note that the smallest fragment has two centers.

\begin{figure}[htpb]
\centering
\includegraphics[scale=0.5]{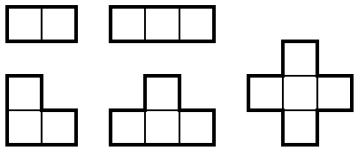}
\caption{Fragments}
\label{fig:fragments}
\end{figure}

The largest shape in the figure is the X-pentomino and the next
is the T-tetromino. All these shapes 
are \emph{fragments} of an X-pentomino. We have thus established

\begin{theorem}
A saturated domino covering of a board gives a decomposition
of the cells of the board into a set of non-overlapping fragments.
\end{theorem}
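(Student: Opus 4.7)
The plan is to formalize the graph-theoretic argument sketched in the paragraph before the theorem. First I would introduce a graph $G$ whose vertex set consists of the cells of the board, with an edge between two cells whenever some domino of the covering links them. Because every cell is covered, each vertex has degree at least one, and so the edges of $G$ correspond bijectively to the dominoes of the covering.

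The heart of the argument is a local redundancy claim: if some edge $uv$ of $G$ has both endpoints of degree $\geq 2$, then $u$ and $v$ are each covered by a second domino, so the domino corresponding to $uv$ can be removed without uncovering any cell. This contradicts saturation. Hence every edge of $G$ is incident to at least one vertex of degree $1$, which is precisely the condition that each connected component of $G$ is a star.

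Next I would enumerate the stars that can actually occur on the grid. Because a cell has at most four grid neighbours, each star has at most four leaves, and the leaves of a star with center $c$ must all lie among the neighbours of $c$. Running through the cases gives exactly the shapes pictured in Figure~\ref{fig:fragments}: a bare domino $K_{1,1}$ (whose two cells serve as twin centers), the straight and bent trominoes $K_{1,2}$, the T-tetromino $K_{1,3}$, and the X-pentomino $K_{1,4}$ itself. All of these are fragments of an X-pentomino. Since connected components partition the vertex set of $G$, the cells of the board are thereby partitioned into non-overlapping fragments.

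The only step requiring genuine care is the redundancy claim, which is where the saturation hypothesis is used essentially; the geometric enumeration of stars in the grid and the deduction of a partition from connected components are both routine. I would expect the write-up to be essentially as short as the discussion that precedes the theorem statement, with a little extra attention to the degenerate two-center case of the domino fragment so that the bijection between stars and fragments is unambiguous.
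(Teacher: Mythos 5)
Your proposal is correct and follows essentially the same route as the paper: the paper's proof is exactly the graph on cells with domino-edges, the observation that saturation forces every edge to have an endpoint of degree 1 so that components are stars, and the enumeration of stars with at most four leaves as fragments of an X-pentomino. The extra care you flag (the redundancy step and the two-center domino case) matches the paper's parenthetical remark and its note that the smallest fragment has two centers.
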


Now consider a board with $\beta$ cells that
admits a saturated covering 
by $\delta$ dominoes forming a set of $\phi$ fragments. 
Observe that any fragment with $f$ cells consists of exactly
$f - 1$ dominoes. Summing over the fragments, we deduce that 
$\delta = \beta - \phi$. 
Thus, in order to make $\delta$ as 
large as possible, we should make $\phi$ as small as possible. 
In other words, a largest saturated domino covering is obtained from
a minimal fragment tiling.  For a board $B$, write 
$|B|$ for the number of cells 
and 
$f(B)$ for the size of a minimal fragment tiling. We have

\begin{theorem}\label{thm:df}
$d(B) = |B| - f(B)$.
\end{theorem}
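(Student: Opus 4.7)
The plan is to prove both inequalities $d(B) \leq |B| - f(B)$ and $d(B) \geq |B| - f(B)$ using the correspondence between saturated coverings and fragment tilings.

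For the upper bound, I would apply the previous theorem directly. Any saturated covering of $B$ with $\delta$ dominoes decomposes the cells into some set of $\phi$ non-overlapping fragments. Because a fragment on $f_i$ cells uses precisely $f_i - 1$ dominoes (it is a star), summing over fragments yields $\delta = |B| - \phi$, exactly as the paragraph before the theorem observes. Since $\phi$ is the size of some fragment tiling, $\phi \geq f(B)$, hence $\delta \leq |B| - f(B)$. Taking the supremum over saturated coverings gives $d(B) \leq |B| - f(B)$.

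For the lower bound, I would construct a saturated covering from a minimal fragment tiling. Start with a fragment tiling of $B$ of minimum size $f(B)$, and choose a center for each fragment (as defined just after Figure~\ref{fig:fragments}). For each fragment on $f_i$ cells, place $f_i - 1$ dominoes, each connecting the chosen center to one of the other cells of the fragment; this is possible because every fragment is a star in the cell-adjacency graph. The total number of dominoes is $\sum_i (f_i - 1) = |B| - f(B)$, and the dominoes cover $B$ because the fragments partition $B$.

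The only thing left to verify, and the one place where the argument needs genuine care, is that this covering is saturated. Here the key observation is that the fragments do not overlap, so a cell is only ever covered by dominoes lying in its own fragment. In a star fragment with center $c$, each non-center cell $\ell$ is covered by exactly one domino, namely $\{c,\ell\}$; removing that domino therefore leaves $\ell$ uncovered. For the degenerate 2-cell fragment (a single domino, both cells regarded as centers), removing the domino leaves both cells uncovered. Hence no domino is redundant, the covering is saturated, and $d(B) \geq |B| - f(B)$. I expect this last verification, together with remembering the two-center degenerate case, to be the main subtle point; the counting identity itself is immediate from the preceding theorem.
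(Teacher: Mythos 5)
Your proof is correct and follows essentially the same route as the paper: the identity $\delta = |B| - \phi$ obtained by summing $f_i - 1$ dominoes over the fragments of a saturated covering, combined with the correspondence between saturated coverings and fragment tilings. The only difference is that you explicitly construct a saturated covering from a minimal fragment tiling and verify saturation (including the two-cell case), a converse step the paper leaves implicit in the phrase ``a largest saturated domino covering is obtained from a minimal fragment tiling''; this is a welcome bit of extra care, not a different argument.
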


Although we insist on non-overlapping tiles in our
fragment tilings, the following lemmas show that smaller coverings
are not available even if we allow tiles to overlap.

\begin{lemma}\label{lem:overlap}
Two overlapping fragments can be replaced by either a single fragment
or two disjoint fragments that cover the same set of squares.
\end{lemma}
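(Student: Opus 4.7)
The plan is to analyse $F_1 \cup F_2$ in terms of centres $c_1$ and $c_2$ of the two fragments: every cell of $F_i$ lies at grid distance at most one from $c_i$, where for a domino either of its two cells may be chosen as the centre. Because $F_1 \cap F_2$ is non-empty, there is a cell within distance one of both centres, so $c_1$ and $c_2$ are at grid distance at most two.

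The first step is to dispose of the easy case. If every cell of $F_1 \cup F_2$ lies within distance one of some single cell $c$ (which may be $c_1$, $c_2$, or a third cell), then $F_1 \cup F_2$ is contained in the X-pentomino centred at $c$, and because two overlapping fragments have connected union, $F_1 \cup F_2$ is itself a single fragment, giving the first conclusion of the lemma. I expect this case to absorb the situation $c_1 = c_2$ and every situation in which one of the two fragments is a domino that sits entirely inside the other's X-pentomino.

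In the remaining case, $c_1 \neq c_2$ and the union escapes every single X-pentomino. Here I would build new fragments $F_1'$ and $F_2'$ by partitioning $U = F_1 \cup F_2$: any cell of $U$ lying in the X-pentomino of exactly one centre $c_i$ is placed in $F_i'$, and the few cells lying in both X-pentominoes are placed by hand so that $c_i \in F_i'$. When $c_1$ and $c_2$ are adjacent, only the centres themselves are shared between the two X-pentominoes; when they are colinear at distance two, the midpoint is also shared; when they are diagonally placed at distance two, up to two further cells are shared. Each $F_i'$ is by construction a connected star around $c_i$, so it is a fragment as soon as it contains at least two cells, and the two pieces are manifestly disjoint with union $U$.

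The main obstacle is verifying that neither $F_1'$ nor $F_2'$ ever collapses to a single cell. If $F_1'$ were the singleton $\{c_1\}$, then every other cell of $F_1$ would also lie in the X-pentomino of $c_2$, forcing $F_1$ to be a short domino swallowed by that X-pentomino and returning us to the single-fragment case. A short case analysis according to whether $c_1$ and $c_2$ are adjacent, colinear at distance two, or diagonal at distance two confirms that this is the only way the construction can degenerate, so the replacement always exists.
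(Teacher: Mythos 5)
Your setup is sound: decomposing $U=F_1\cup F_2$ by which X-pentomino ($c_1$'s or $c_2$'s) each cell lies in, handling the ``union fits in one X-pentomino'' case separately, and assigning the doubly-covered cells by hand is a workable plan, essentially a reorganization of the paper's own case split (the paper argues by whether the centers are shared, adjacent, or neither, using the fact that a domino may take either of its cells as center). The gap is in your final verification that neither piece collapses. The claim ``if $F_1'=\{c_1\}$ then every other cell of $F_1$ lies in the X-pentomino of $c_2$, forcing $F_1$ to be a short domino swallowed by that X-pentomino and returning us to the single-fragment case'' is false. Take $F_1=\{(0,0),(1,0)\}$ with $c_1=(0,0)$ and $F_2=\{(1,0),(2,0),(3,0)\}$ with $c_2=(2,0)$, overlapping at $(1,0)$. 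Every cell of $F_1$ other than $c_1$ does lie in the X-pentomino of $c_2$, but $F_1$ is \emph{not} swallowed by it ($c_1$ is at distance two from $c_2$), and the union is a $1\times 4$ strip contained in no single X-pentomino, so you are not returned to the easy case. If the shared cell $(1,0)$ is handed to $F_2'$, then $F_1'$ really is the singleton $\{(0,0)\}$. (Separately, in the diagonal configuration $F_1$ need not be a domino at all: with $c_2=(1,1)$ it can be the tromino $\{(0,0),(1,0),(0,1)\}$.)

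The construction itself is rescuable --- in the strip example you must actively assign $(1,0)$ to $F_1'$, which leaves $F_2'=\{(2,0),(3,0)\}$ intact --- but that is exactly the content your proof skips: you must show that the doubly-covered cells can always be distributed so that \emph{both} pieces end up with at least two cells, i.e.\ that the two pieces never compete for too few shared cells. The only genuinely tight situation is when both $F_1\setminus\{c_1\}$ and $F_2\setminus\{c_2\}$ consist entirely of doubly-covered cells; there the centers are at distance two and have at most two common neighbors, and one checks that either the union is a tromino (hence a single fragment) or there are two shared cells to split one apiece (the $2\times 2$ square). That case analysis, not the ``swallowed domino'' reduction, is what closes the argument; as written, your proof does not establish the lemma.
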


\begin{proof}
If there exists a cell that is a center for both fragments, then their union is
a fragment.
Otherwise, suppose the two fragments have adjacent centers. 
Then no other cell can belong to both fragments, and we can remove 
the link between the two centers to give us two disjoint fragments. 
The final possibility is that the fragments have neither shared nor 
adjacent centers. In this case we just assign the shared cells to 
one or other center arbitrarily, ensuring that each fragment 
contains at least two cells.
\end{proof}

Observe that if a fragment is adjacent to an isolated square, that isolated
square can be replaced by a domino that overlaps the fragment. 
Lemma \ref{lem:overlap} applies and we deduce

\begin{lemma}\label{lem:isolate}
An isolated square and an adjacent fragment 
can be replaced by either a single fragment
or two disjoint fragments that cover the same set of squares.
\end{lemma}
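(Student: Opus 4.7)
The plan is to deduce this lemma directly from Lemma~\ref{lem:overlap} by manufacturing an overlap. Write $s$ for the isolated square and $F$ for the adjacent fragment. By hypothesis some cell $c \in F$ shares an edge with $s$, since that is what ``adjacent'' means here.

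The first step is to form an auxiliary two-cell fragment $D = \{s, c\}$. The sentence preceding Figure~\ref{fig:fragments} observes that the smallest fragment of an X-pentomino consists of two adjacent cells, both of which are centers, so $D$ is a legitimate fragment; this is exactly the ``domino that overlaps the fragment'' alluded to in the paragraph introducing the lemma.

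The second step is to observe that $F$ and $D$ are two overlapping fragments (they share the cell $c$) and together they cover precisely the set $F \cup \{s\}$. Lemma~\ref{lem:overlap} then applies verbatim to the pair $(F, D)$ and replaces them by either a single fragment or two disjoint fragments covering the same set of cells, namely $F \cup \{s\}$, which is the desired conclusion.

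There is essentially no obstacle: the proof is a one-line reduction once the auxiliary fragment $D$ is in place. The only thing worth verifying is that each of the three internal cases of Lemma~\ref{lem:overlap} really does produce a covering of $F \cup \{s\}$ (shared center, adjacent centers, or arbitrary reassignment of the one shared cell $c$), but all three branches preserve the covered set by construction, so the conclusion is immediate.
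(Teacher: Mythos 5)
Your proof is correct and takes exactly the paper's route: the paper likewise replaces the isolated square by a domino that overlaps the adjacent fragment and then invokes Lemma~\ref{lem:overlap}. Your write-up merely makes the auxiliary domino $D=\{s,c\}$ and the identity $F\cup D = F\cup\{s\}$ explicit.
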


We now turn our attention to X-pentomino coverings of a board.
Note that for a fragment tiling, we insist that the fragments are disjoint and
are completely contained in the board under consideration.  For X-pentomino
coverings we relax both of these requirements.

\begin{theorem}\label{thm:x<=f}
The smallest number of X-pentominoes that can cover 
a board $B$ allowing overlaps and tiles that poke outside 
is not greater than
$f(B)$.
\end{theorem}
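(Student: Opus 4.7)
The plan is to exhibit an X-pentomino covering of $B$ that uses exactly $f(B)$ tiles by building it directly from a minimal fragment tiling.

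First, I would fix a minimal fragment tiling of $B$, consisting of $f(B)$ fragments. For each fragment, choose one of its centers (the two-cell fragment has two centers, and either will do; the larger fragments each have a unique center). For every chosen center $c$, place on the plane the full X-pentomino centered at $c$, namely the five cells consisting of $c$ together with its four orthogonal neighbors. This gives a family of $f(B)$ X-pentominoes, some of which may poke outside $B$ or overlap each other; both are explicitly allowed by the definition of an X-pentomino covering.

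Next, I would verify that these X-pentominoes already cover every cell of $B$. Because the fragments form a tiling of $B$, every cell of $B$ belongs to at least one fragment. By construction, a fragment is precisely a star inside the X-pentomino centered at its center cell, so every cell of the fragment is either that center or one of its four orthogonal neighbors, and hence lies in the X-pentomino placed at the chosen center. Therefore every cell of $B$ lies in one of the $f(B)$ X-pentominoes, giving a valid covering of size $f(B)$.

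The only real content is the observation that a fragment is contained in the X-pentomino at its chosen center, and this is immediate from the classification of fragments shown in Figure~\ref{fig:fragments}, each of which is by definition a sub-star of the X-pentomino. So the argument is essentially a one-line translation: replace each fragment with the full X-pentomino at its center. The main potential pitfall to flag is just the two-center case of the domino fragment, which is handled by choosing either endpoint as the X-center.
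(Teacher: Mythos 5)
Your proposal is correct and is essentially the paper's own argument: the paper's proof simply says that in a minimal fragment tiling ``each tile can be expanded to an X-pentomino,'' which is exactly your construction of placing the full X-pentomino at a chosen center of each fragment. Your write-up just makes explicit the (easy) verification that every fragment is a sub-star of the X-pentomino at its center, including the two-center domino case.
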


\begin{proof}
Denote the smallest number of X-pentominoes that can cover the 
board by $x(B)$. In a covering by fragments, each tile can be 
expanded to an X-pentomino, hence $f(B) \geq x(B)$. 
\end{proof}

For many boards, including rectangular boards, $f(B)=x(B)$. 
Define a board to be \emph{regular} 
unless it contains a pair of cells that have no common
neighboring square on the board but do have one in the ambient grid.

\begin{theorem}\label{thm:x=f}
If $B$ is a regular board then $x(B) = f(B)$.
\end{theorem}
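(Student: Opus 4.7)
The plan is to start with an X-pentomino covering that attains $x(B)$ and transform it into a fragment tiling of $B$ with at most $x(B)$ tiles; combined with Theorem \ref{thm:x<=f} this yields equality. So the work is to establish $f(B) \leq x(B)$.

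The first and decisive step is to arrange that every X-pentomino is centered at a cell of $B$. Suppose $X_i$ has geometric center $c_i \notin B$; then the cells of $B$ it covers lie among the four grid-neighbors $n_1,n_2,n_3,n_4$ of $c_i$. Regularity forbids two opposite arms $n_1,n_3$ from both being in $B$, because their only common neighbor in the ambient grid is $c_i$, so regularity would force $c_i\in B$. Hence $X_i$ meets $B$ either in a single arm $n_k$ or in two perpendicular arms $n_j,n_k$. In the first case I would replace $X_i$ by the X-pentomino centered at $n_k\in B$. In the second, regularity promises that the diagonal cell $d$ (the other common neighbor of $n_j$ and $n_k$ in the ambient grid) lies in $B$, and I would replace $X_i$ by the X-pentomino centered at $d$, which still covers both $n_j$ and $n_k$. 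Each replacement preserves the covering of $B$ and keeps the total count at $x(B)$.

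With every X-pentomino now centered at some $c_i\in B$, set $F_i := X_i\cap B$. Because every cell of $B$ has a neighbor in $B$ (our standing hypothesis), $c_i$ has at least one arm in $B$, so $F_i$ is a connected subset of $X_i$ of size at least two, hence a fragment. The collection $\{F_i\}$ covers $B$, though the $F_i$ may overlap. Apply Lemma \ref{lem:overlap} repeatedly: pick any overlapping pair and replace it with either a single fragment or two disjoint fragments that cover the same cells. A monovariant such as $\sum_i |F_i|$ strictly decreases each time an overlap is eliminated (the double-counted cells disappear), guaranteeing termination in a genuine fragment tiling of $B$ with at most $x(B)$ tiles.

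The main obstacle is the recentering step: regularity is used there, and in precisely the way the hypothesis is formulated, both to rule out opposite-arm configurations when $c_i\notin B$ and to hand us a valid diagonal replacement center inside $B$. Once recentering is achieved, the remaining steps are essentially bookkeeping powered by Lemma \ref{lem:overlap}.
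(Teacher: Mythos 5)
Your proof is correct and follows essentially the same route as the paper: restrict a minimal X-pentomino covering to the board, invoke regularity precisely when a tile's center lies off the board and it meets $B$ in two non-adjacent cells (using the diagonal common neighbor), and then clean up overlaps with Lemma~\ref{lem:overlap}. The only real difference is organizational: you recenter each X-pentomino onto $B$ before trimming, so every trimmed tile is automatically a fragment and Lemma~\ref{lem:isolate} is never needed, whereas the paper trims first and then absorbs the resulting isolated cells.
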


\begin{proof}
We show that a minimal X-pentomino covering can
be transformed to a fragment tiling without
increasing the number of tiles.  We may first
trim any parts of tiles that poke out of the board.
It is possible that such a trimming might replace a
tile by two isolated cells.  In this case, regularity
of the board guarantees that these cells have a common neighbor 
belonging to the board. 
We join the common neighbor to the two isolated cells to form a fragment which we
use in their place.  This ensures that trimming does not
increase the number of tiles.
After the trimming we have a cover by (possibly overlapping)
fragments and isolated
cells.  Lemmas \ref{lem:overlap} and \ref{lem:isolate} are now
applied to remove the overlaps and absorb the isolated cells.
\end{proof}

For an arbitrary board $B$ the size $x(B)$ of a minimal
X-pentomino cover is not necessarily equal to the size $f(B)$ 
of a minimal fragment tiling. In
the following pair of examples (see Figure~\ref{fig:counter}), in 
which the truncation
of an X-pentimino that pokes out of the board leaves
two isolated squares, we have $x(B) < f(B)$.

\begin{figure}[htpb]
\begin{center}$
\begin{array}{cc}
\includegraphics[scale=0.3]{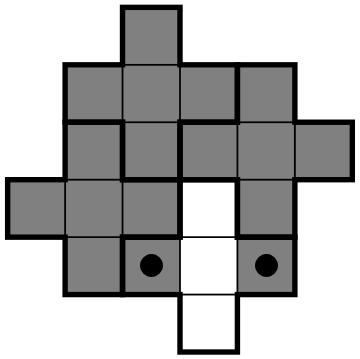} &
\includegraphics[scale=0.3]{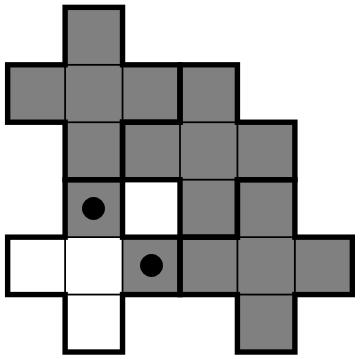}
\end{array}$
\end{center}
\caption{Examples of Irregular Boards}
\label{fig:counter}
\end{figure}

In both examples, the boards are gray. 
The two 
offending cells are marked with dark disks. 
To see that 4 fragments cannot tile either board, 
observe that the number of cells is 17 in both cases. This implies that
an X-pentomino must be used in the tiling. 
It is easy to check that no location for an X-pentomino 
allows the completion of a tiling with only 4 fragments.

Although regularity of the board is sufficient to guarantee that
$f(B) = x(B)$,
it is not necessary. For example, if a middle cell is
removed from  a $3 \times 2$ rectangle,
we are left with an irregular board for which $f(B) =x(B) = 2$.

The earlier analysis applies to any board,
we now specialize to square boards.
Let $f(n)$ be the minimal number of fragments that are 
required to cover an $n \times n$ board without overlap. 
Let $x(n)$ be the minimum number of X-pentominoes that can cover 
an $n \times n$ square board (where overlap and poking out are allowed).  
An $n \times n$ board is regular so our earlier theorems give

\begin{corollary} 
$d(n) = n^2 - f(n)$.
\end{corollary}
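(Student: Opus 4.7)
The plan is to apply Theorem~\ref{thm:df} directly to the $n \times n$ board. Writing $B$ for this board, we have $|B| = n^2$, $d(B) = d(n)$, and $f(B) = f(n)$, all by the notational conventions introduced earlier in the paper. Since Theorem~\ref{thm:df} asserts $d(B) = |B| - f(B)$ for every board, substituting the three equalities above yields $d(n) = n^2 - f(n)$.

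No auxiliary construction is needed. The real work is already packaged inside Theorem~\ref{thm:df}: its proof formed the star graph on cells joined by dominoes, identified each connected component as a fragment, and used the observation that a fragment on $f$ cells requires exactly $f - 1$ dominoes. Summing over fragments produced the identity $\delta = \beta - \phi$ relating the number of dominoes, cells, and fragments in a saturated covering. The corollary is simply this identity specialized to a square board of side $n$.

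The regularity of $B$, highlighted in the sentence immediately preceding the corollary, plays no role in this particular deduction; Theorem~\ref{thm:df} holds for arbitrary boards. I expect the authors emphasize it in order to set up a companion statement: since any square board is regular, Theorem~\ref{thm:x=f} gives $x(n) = f(n)$, and hence one also obtains $d(n) = n^2 - x(n)$, which ties the largest saturated domino covering directly to the smallest X-pentomino covering. Since the corollary as stated is an immediate instance of a previously established theorem, there is no obstacle to overcome; the only care required is in confirming that the shorthand $d(n)$ and $f(n)$ match the $d(B)$ and $f(B)$ appearing in Theorem~\ref{thm:df}.
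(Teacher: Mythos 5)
Your proof is correct and matches the paper's: the corollary is stated there as an immediate specialization of Theorem~\ref{thm:df} to the $n \times n$ board, with $|B| = n^2$. Your observation that regularity is irrelevant here and is only needed for the companion corollary $x(n) = f(n)$ is also accurate.
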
 

\begin{corollary} 
$x(n) = f(n)$.
\end{corollary}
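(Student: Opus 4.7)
The plan is to invoke Theorem \ref{thm:x=f} directly. That theorem gives $x(B) = f(B)$ for every regular board, so all I need to do is verify that the $n \times n$ square board is regular in the sense defined just before Theorem \ref{thm:x=f}: no pair of cells of $B$ has a common neighbor in the ambient grid $\mathbb{Z}^2$ without also having a common neighbor inside $B$.

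So first I would describe the $n \times n$ board as the set of cells $(i,j)$ with $1 \le i,j \le n$ and enumerate the ways two cells $(a,b), (c,d) \in B$ can share a neighbor $(e,f)$ in $\mathbb{Z}^2$. Sharing a neighbor forces the two cells to lie at $\ell^1$-distance exactly $2$, so there are only two cases up to symmetry: either the cells agree in one coordinate and differ by $2$ in the other (a ``straight'' pair), or they are diagonally adjacent (differ by $1$ in each coordinate). In the straight case the unique common neighbor is the midpoint, whose coordinates lie in $[1,n]$ because they are averages of coordinates in $[1,n]$, so the common neighbor belongs to $B$. In the diagonal case both ambient common neighbors are of the form $(a, d)$ and $(c, b)$, and again their coordinates lie in $[1,n]$, so at least one (in fact both) belong to $B$.

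Having checked these cases, the $n \times n$ board is regular, and Theorem~\ref{thm:x=f} yields $x(n) = f(n)$.

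I do not anticipate any real obstacle here; the whole content of the corollary is the routine verification that a full square grid is closed under taking ambient common neighbors, and the heavy lifting (the trimming/overlap argument using Lemmas~\ref{lem:overlap} and~\ref{lem:isolate}) is already done inside Theorem~\ref{thm:x=f}. If anything, the only thing worth flagging is that the same argument works verbatim for any convex (in the coordinate sense) rectangular board, which is consistent with the paper's remark that the equality $f(B)=x(B)$ holds for rectangular boards in general.
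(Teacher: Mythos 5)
Your proposal is correct and matches the paper's argument exactly: the paper likewise deduces the corollary by noting that an $n \times n$ board is regular and applying Theorem~\ref{thm:x=f}, merely asserting regularity where you spell out the (routine) case check. No issues.
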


\section{Small Examples}\label{sec:examples}

We now consider some small examples.
First we prove that $d(2) = 2$. 
We must show that 3 dominoes on a $2 \times 2$ board include
a redundancy. Let us 
rotate the board so that at least two of the dominoes are horizontal. 
If they coincide, then we can remove one of them. If not, they 
completely cover the board and we can remove the third one. 

To show that $d(3) = 6$ it is simplest to prove that $f(3) = 3$. 
A fragment tiling with size 2 would have to use an
X-pentomino (because $9 > 4 \times 2$).  This would cut off
all 4 corners preventing the placement of further fragments.
In Figure~\ref{fig:small}, we
exhibit a covering with 3 fragments. 

For $n = 4$, we have $f(4) = 4$. 
Again, we cannot use an X-pentomino in a fragment tiling because it would 
cut out a corner. As it is possible to cover the board with 
four T-tetrominoes, it must be the best covering.

For $n = 5$, we show that $f(5) = 7$.
There are five ways to place an X-pentomino so that 
a corner is not cut off. In all of these ways the X claims the 
center point. Hence, two X-pentominoes cannot be used in a covering. 
Any tiling with six pieces would need one X and five Ts. In 
addition, if we consider the checkerboard coloring, each T-tile 
has 1 square of one color and 3 of the other color. Hence, 
five T-tiles have an odd number of each color. 
Hence the X-pentomino cannot be right in the center, 
and must instead just cover the center cell with a 
branch. Two of the corner cells are now uncoverable. 
Thus, we cannot cover the board with 6 fragments. 
A covering with 7 fragments is demonstrated in Figure~\ref{fig:small}.

\begin{figure}[htpb]
\centering
\includegraphics[scale=0.5]{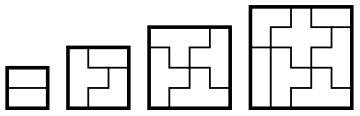}
\caption{Small Examples}
\label{fig:small}
\end{figure}

For $n=6$, it is impossible to tile
with 9 fragments.  Otherwise, the average number of 
cells in a fragment is 4. 
By parity, the fragments cannot all be T-pentominos.
Thus the number of fragments
that have fewer
than 4 cells cannot be larger than the number of X-pentominoes. 
But every X-pentomino needs two small fragments at the 
closest corner. Hence the tiling is impossible.  Our example of
a tiling in Figure~\ref{fig:covering7} with 
10 fragments establishes that $f(6) = 10$.

In our original problem of the 7 by 7 square, we
claim that $f(7) = 12$. The square 
consists of an inner 5 by 5 square surrounded by a rim. We first 
prove that we cannot use six X-pentominoes in a tiling of the 
square. The center of any X-tile must be in the inner square 
and cannot be at a corner of this square (or a corner of the larger 
square would be cut off). However six X-pentominoes cover 30 squares, 
so five of them would have to poke out into the rim. Two of these 
five would have to be centered on the same side of the inner 
square, and the three available locations on that side do not 
provide enough room for the center of two X-tiles.

So we can have at most five X-tiles. Therefore a tiling with 
11 fragments would have to use five X-tiles and six T-tiles. 
However, only T-tiles can be used to cover the corners of the 
board, and it is easy to see that the $2\times 2$ region at 
each corner requires at least two T-tiles. Therefore eight 
T-tiles must be used and there is no tiling with 11 fragments.

We have proved that it is impossible to cover the board with 
11 tiles. A covering with 12 tiles is shown in 
Figure~\ref{fig:covering7}.

\begin{figure}[htpb]
\centering
\includegraphics[scale=0.5]{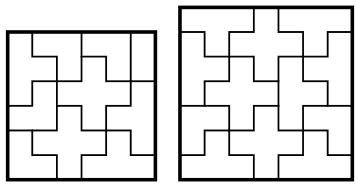}
\caption{Tilings of $6 \times 6$ and $7 \times 7$ Boards}
\label{fig:covering7}
\end{figure}

We have shown that the sequence $f(n) = x(n)$ starts as 
2, 3, 4, 7, 10, 12, $\ldots$.
Correspondingly, $d(n)$ = 2, 6, 12, 18, 26, 37, $\ldots$.
The latter sequence provides the new entry A193764
in OEIS. The sequence corresponding to the original puzzle is now the sequence
A193765: $A193765(n) = A193765(n) +1$.  It is the size of the
smallest set of dominoes for which any covering of an $n$ by $n$ board
involves a redundant domino.

Our sequence for $f(n)$ matches sequence A104519 of T. Suzuki \cite{OEIS}.
The matched sequence has been extended up to
$f(29) = 188$ \cite{ala}.
Suzuki's sequence is defined to count the number of
monominoes needed to exclude X-pentominoes from an $n \times n$
board.  The sequence also counts the minimal number of
X-polynominoes needed to cover an $n \times n$ board (i.e. $x(n)$).

\section{Large Boards}\label{sec:largeboards}

Let us look at large boards. 
Obviously $f(n) \geq n^2/5$ since the largest tile has size 5. 
One can easily obtain an upper bound for $x(n)$, which in view 
of our Theorem~\ref{thm:x=f} is also an upper bound for $f(n)$. Consider 
an infinite tessellation of the square grid using X-pentominoes. 
If an $n$ by $n$ square in the grid meets exactly $k$ pentominoes, 
then we have $x(n) \leq k$.
There are only five inequivalent ways to place an $n \times n$ square
onto the tesselation.  We obtain five upper bounds for $f(n)$ of which the smallest
gives $f(n) \leq \frac{(n+2)^2 - k(n)}{5},$  where $k(n)$ is $4,4,6,5,16$
according as $n$ mod 5 is  $0, 1, 2, 3, 4$.

In general, the restriction of the tessellation to a finite 
square of tiles gives an inefficient choice of covering pentominoes 
on the rim of the square. For example, in the following restriction 
of the tessellation to a 5 by 5 square, the use of 2 pentominoes 
denoted $H$ and $I$ to cover 2 adjacent cells on the bottom row is 
wasteful (see Figure~\ref{fig:xpentomino}). It would be better to 
place a single pentomino so as to cover these cells. In a similar 
way the pentominoes $C$ and $F$ that cover just one cell each in 
the right column can be replaced by a single pentomino that cover 
both of these cells and overlaps with pentomino $B$ on other cells. 
This improves our tesselation derived upper bound for $x(5)$ from 9 to 7.

\begin{figure}[htpb]
\centering
\includegraphics[scale=0.5]{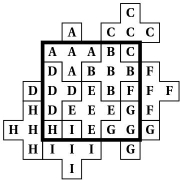}
\caption{An X-pentomino Covering of a $5 \times 5$ Board}
\label{fig:xpentomino}
\end{figure}

We have demonstrated that 
for rectangular boards that are large in both directions 
$x(B)$ 
is of the order of $|B|/5$. This is indeed true and the exact number is 
known as we explain in Section~\ref{sec:dominum}.

\section{Other Grids}\label{sec:aliens}

We can generalize our main theorems to other grids. 
Consider a grid made of triangles. Here the analogue of a domino is a tile  
made of two adjacent triangles. Fragments now have one or two
centers and 1, 2 or 3 spokes, see Figure~\ref{fig:trfr}.

\begin{figure}[htpb]
\centering
\includegraphics[scale=0.5]{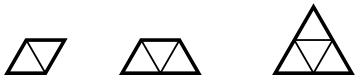}
\caption{Fragments for a Triangular Grid}
\label{fig:trfr}
\end{figure}

Similarly, for a hexagonal grid a domino (analogue) is a shape made of two adjacent 
hexagons. The fragments can have 
up to three centers and up to 6 spokes, see Figure \ref{fig:hfr}. 
Note that in the hexagonal case, there are three 
``multi-yolk'' fragments with multiple centers. 

Saturated domino coverings in the rectangular grid are related
to coverings by X-pentominoes. For triangular and hexagonal grids, the role of 
the X-pentomino is played by the maximal fragment. 
For a board $B$ on a triangular or hexagonal grid, we define $d(B)$
as the size of a maximal
saturated cover by domino analogues. 
The values of $f(B)$ and $x(B)$ are also defined by analogy with the
case of the rectangular grid. Thus $f(B)$ is the size of a minimal fragment tiling and
$x(B)$ is the minimal size of 
a covering with maximal fragments (allowing tiles to overlap
or poke out). 

\begin{figure}[htpb]
\centering
\includegraphics[scale=0.5]{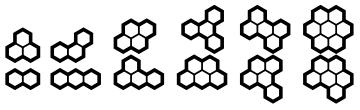}
\caption{Fragments for a Hexagonal Grid}
\label{fig:hfr}
\end{figure}

As before we assume that boards do not have any isolated cells.
For both of these new types of grid, the 
analogue of Theorem \ref{thm:df} holds:

\begin{theorem}
If $B$ is a board in a triangular or hexagonal grid, then $d(B) = |B| - f(B)$.
\end{theorem}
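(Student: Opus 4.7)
The plan is to follow the proof of Theorem \ref{thm:df} essentially verbatim, with the only grid-dependent input being the classification of fragments supplied by Figures \ref{fig:trfr} and \ref{fig:hfr}. First, given a saturated covering of $B$ by domino-analogs, I form the graph $G$ on cells with the dominos as edges. The saturation argument from the square case transports unchanged: if both endpoints of a domino had degree at least $2$ in $G$, then deleting that domino would still leave each endpoint covered by one of its other incident dominos, contradicting saturation. Hence every edge of $G$ has at least one degree-$1$ endpoint. Combined with the bounded vertex-degree of the underlying grid ($3$ for triangles, $6$ for hexagons), this forces each connected component of $G$ to be one of the fragment shapes listed in the respective figure.

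Every such fragment is a tree, so a fragment that spans $f$ cells contributes exactly $f - 1$ dominos. Summing over the $\phi$ components in the covering yields the identity $\delta = |B| - \phi$ where $\delta$ is the number of dominos, and since the components form a valid fragment tiling we have $\phi \geq f(B)$, giving $d(B) \leq |B| - f(B)$. For the matching lower bound I start with any minimal fragment tiling and place dominos inside each fragment realizing a spanning tree. The resulting cover uses $|B| - f(B)$ dominos. Saturation is checked locally: removing any domino isolates a leaf of its fragment, and because fragments are pairwise disjoint this leaf cannot be covered by any other domino.

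The step requiring the most care is the hexagonal case, in particular the three multi-yolk fragments. I must verify that each multi-yolk shape genuinely appears as a connected component of some degree-constrained $G$ arising from a saturated covering, and conversely that a spanning set of dominos can be chosen inside such a fragment so that the resulting local piece is itself saturated. Once this case analysis is in place, the counting identity is identical to the rectangular proof and delivers $d(B) = |B| - f(B)$.
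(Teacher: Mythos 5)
Your proof is correct and follows the same route the paper intends: the paper offers no separate argument for this theorem, simply asserting that the star-decomposition proof of Theorem~\ref{thm:df} carries over to these grids, which is exactly what you reproduce. One fix in the lower-bound direction: you should place a spanning \emph{star} (not an arbitrary spanning tree) of dominoes inside each fragment, since removing an interior edge of a non-star spanning tree leaves every cell covered and so would not be saturated; every fragment in Figures~\ref{fig:trfr} and~\ref{fig:hfr} contains a center adjacent to all of its other cells, so such a star always exists and your leaf-isolation check then goes through.
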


The proof of the following theorem is similar to the proof of 
Theorem \ref{thm:x<=f} for rectangular boards. 

\begin{theorem}\label{thm:xf}
If $B$ is a board in a triangular or hexagonal grid, then $x(B) \geq f(B)$.
\end{theorem}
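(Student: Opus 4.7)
The plan is to construct, from any optimal covering of $B$ by $x(B)$ maximal fragments (allowing overlaps and poke-outs), a fragment tiling of $B$ whose size does not exceed $x(B)$; this yields $f(B) \le x(B)$. Structurally the argument follows the trimming/overlap-removal strategy used for regular rectangular boards in Theorem~\ref{thm:x=f}, rather than the expansion argument of Theorem~\ref{thm:x<=f}.

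The preliminary step is to establish triangular and hexagonal analogues of Lemmas~\ref{lem:overlap} and~\ref{lem:isolate}. The trichotomy used in the rectangular case transfers directly: two overlapping fragments either share a centre (and their union is again a fragment), have centres on adjacent cells (and removing the spoke between those centres splits them into disjoint fragments), or have centres that are neither shared nor adjacent (in which case the overlap cells can be arbitrarily reassigned, keeping each piece of size at least two). The only genuinely new case is the hexagonal multi-yolk fragments with two or three centres; I would check that against any partner fragment at least one of the three alternatives applies, which is a finite check against the shapes listed in Figure~\ref{fig:hfr}.

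The main step is trimming. Starting from a minimum covering by maximal fragments, delete every cell of every tile that lies outside $B$. In the triangular grid each maximal fragment is a centre with up to three spokes, so the residue is either a sub-fragment or the bare centre cell. In the hexagonal grid the residue of a multi-spoke or multi-yolk fragment can be enumerated case by case, and in the worst case breaks into a sub-fragment together with a few isolated cells. After trimming we therefore have a (possibly overlapping) cover of $B$ by fragments and isolated cells, of total size at most $x(B)$. Repeated application of the triangular and hexagonal versions of Lemmas~\ref{lem:overlap} and~\ref{lem:isolate} then absorbs every overlap and every isolated cell, producing a genuine fragment tiling of $B$ whose size is at most $x(B)$.

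The main obstacle I anticipate is controlling the isolated cells produced by trimming. In Theorem~\ref{thm:x=f} this step forced a regularity hypothesis so that two stranded cells always shared an interior neighbour; no such assumption is made here. I would handle this by a minimality argument on the $x$-covering: if trimming a tile produced a cell with no interior neighbour on which to attach, then the original tile could have been replaced by one whose centre sits further inside $B$ while still covering the same boundary cells, contradicting the minimality of the $x(B)$-covering. Once this local reduction is verified for each maximal fragment shape in Figures~\ref{fig:trfr} and~\ref{fig:hfr}, the overlap lemmas apply uniformly and the fragment tiling is complete.
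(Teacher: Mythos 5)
Your proof goes in the opposite direction from the paper's. The paper's own proof of Theorem~\ref{thm:xf} is the one-line expansion argument of Theorem~\ref{thm:x<=f}: take a minimal fragment tiling and expand each tile to a maximal fragment, obtaining a covering by maximal fragments (overlaps and poking out now permitted) of the same cardinality, whence $x(B) \leq f(B)$. The inequality printed in the statement is evidently a typo for $x(B) \leq f(B)$, equivalently $f(B) \geq x(B)$: the surrounding text says the proof is ``similar to the proof of Theorem~\ref{thm:x<=f}'' (which proves the $\leq$ direction), the theorem is then combined with a separate trimming argument to obtain equality for \emph{regular} boards only, and the irregular boards of Figures~\ref{fig:counter3} and~\ref{fig:counter4} satisfy $x(B) < f(B)$. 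You instead read the statement literally and set out to prove $f(B) \leq x(B)$ by the trimming route of Theorem~\ref{thm:x=f}.

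That literal statement is false without a regularity hypothesis, so your argument must break somewhere, and it breaks exactly where you flagged the risk. Your proposed repair --- that if trimming strands a cell with no usable interior neighbour then the offending tile could be recentred inside $B$, ``contradicting the minimality of the $x(B)$-covering'' --- does not work: replacing one tile by another tile leaves the tile count unchanged, so minimality of the count is never contradicted; and such a replacement need not exist at all. The triangular board of Figure~\ref{fig:counter3} is a concrete failure: its minimal covering uses two maximal fragments, one of which must poke out, the board cells that tile reaches have their common neighbour outside the board, and no fragment tiling with two tiles exists, so $x(B) = 2 < 3 = f(B)$. If you keep the trimming-plus-absorption approach you must assume regularity, which is precisely the role of the two theorems that follow this one; for Theorem~\ref{thm:xf} itself the intended argument is simply expansion, with the only point worth checking being that every fragment (including the hexagonal multi-yolk shapes) is contained in some single maximal fragment.
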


A board made of triangles is \emph{regular} if 
for any two non-adjacent cells with a common neighbor in the ambient grid, 
that neighbor belongs to the board. Equivalently the board 
does not contain exterior angles of 60 degrees.

In particular, if a maximal fragment 
meets a regular board,
the intersection cannot contain two different spokes without the center. 
Hence the intersection is 
either a 
fragment or a single cell. This is enough to prove 
the analogue of Theorem~\ref{thm:x=f}:

\begin{theorem}
If $B$ is a regular board on a triangular grid, then $x(B) = f(B)$.
\end{theorem}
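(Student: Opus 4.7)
The plan is to mirror the proof of Theorem~\ref{thm:x=f}. Starting with a minimal covering of $B$ by maximal triangular fragments that is allowed to overlap and poke outside, I would trim each tile to its intersection with $B$. The paragraph immediately preceding the theorem already establishes the key geometric fact: regularity of $B$ forces the intersection of any maximal fragment with $B$ to be either a fragment or a single cell. (If two spokes survived trimming but their center did not, those spokes would be non-adjacent cells of $B$ whose common neighbor in the ambient grid lies outside $B$, contradicting regularity.) Thus trimming yields a covering of $B$ by (possibly overlapping) fragments and isolated cells whose total count is at most $x(B)$.

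The next step is to clean up by invoking triangular analogues of Lemma~\ref{lem:overlap} and Lemma~\ref{lem:isolate}. The first replaces any pair of overlapping fragments by either a single merged fragment or two disjoint fragments covering the same cells, and the second absorbs each isolated cell into an adjacent fragment. Neither operation increases the tile count, so what remains is a genuine fragment tiling with at most $x(B)$ pieces. This gives $f(B) \le x(B)$, and combined with Theorem~\ref{thm:xf} we conclude $x(B)=f(B)$.

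The main obstacle is verifying the two cleanup lemmas in the triangular setting. The catalogue in Figure~\ref{fig:trfr} is richer than in the rectangular case, since fragments may have one or two centers and up to three spokes per center. I would therefore carry out a case analysis on how two overlapping triangular fragments share cells, split by whether they share a center, have adjacent centers, or have neither. In each case one either merges the two fragments at a shared center or cuts an internal edge to separate them, exactly as in the original proof; the list of configurations is longer but every case reduces to a local check against Figure~\ref{fig:trfr}. Once the overlap lemma is in hand, the isolated-cell lemma follows by attaching the isolated cell as an extra spoke to the neighboring fragment and then applying the overlap lemma. This enumeration, while routine, is by far the most laborious piece of the argument.
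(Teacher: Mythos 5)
Your proposal matches the paper's argument: the paper proves this theorem by exactly the observation you cite (regularity forces the intersection of a maximal fragment with $B$ to be a fragment or a single cell, since two surviving spokes without their center would violate regularity) and then appeals to the trimming-plus-cleanup machinery of Theorem~\ref{thm:x=f}. You have in fact been slightly more careful than the paper, which does not explicitly restate or verify the triangular analogues of Lemmas~\ref{lem:overlap} and~\ref{lem:isolate}.
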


Figure~\ref{fig:counter3} illustrates an example of an irregular board 
that can be covered by two maximal fragments one of which pokes out. 
However, it
requires three fragments to be tiled. 
As in our examples for square grids, the board is gray and the 
offending cells are marked by disks.

\begin{figure}[htpb]
\centering
\includegraphics[scale=0.2]{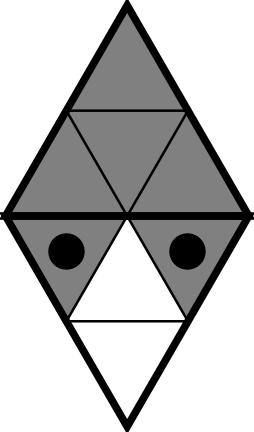}
\caption{An Irregular Triangular Board}
\label{fig:counter3}
\end{figure}

Analogously, a hexagonal board is called \emph{regular} if for any 
two hexagonal cells 
that have a common neighbor, at least one of the 
neighbors belongs to the board. 
For such boards the analogue of Theorem~\ref{thm:x=f} holds:

\begin{theorem}
If $B$ is a regular board on a hexagonal grid, then $x(B) = f(B)$.
\end{theorem}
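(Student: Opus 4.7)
The plan is to adapt the argument of Theorem~\ref{thm:x=f} to the hexagonal setting. By Theorem~\ref{thm:xf} one of the two inequalities is already available, so it suffices to take a minimum cover of $B$ by maximal hexagonal fragments (with overlaps and pieces that poke outside $B$ allowed) and transform it into a genuine fragment tiling of $B$ without increasing the number of tiles.

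The first step is to trim away, from each covering tile, the cells that lie outside $B$. I would show that the hexagonal regularity hypothesis guarantees this trimming never increases the tile count. The delicate case is when trimming disconnects a tile. The worst scenario is that a maximal fragment is reduced to two or more cells of $B$ that were spokes attached to a center cell now lying outside $B$. Any two such spokes share a common neighbor in the ambient grid (namely, the removed center), so by the hexagonal regularity hypothesis they share at least one common neighbor that does lie in $B$. I would use such an in-board common neighbor as a substitute center to reassemble a fragment from these cells, keeping the tile count unchanged. A completely isolated cell produced by trimming is also fine, since it will be absorbed below.

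Once the trimming is done, we have a covering of $B$ by (possibly overlapping) fragments together with possibly some isolated cells. I would then invoke the hexagonal analogues of Lemmas~\ref{lem:overlap} and~\ref{lem:isolate} to remove the overlaps and absorb the isolated cells. These analogues go through by essentially the same center-merging or center-splitting dichotomy as in the rectangular proofs: two overlapping fragments either have a common center (merge) or have adjacent centers (split the connecting link), with the richer hexagonal fragment inventory accommodating both outcomes. After these steps we are left with a fragment tiling of $B$ whose size is at most that of the original $x$-cover, which completes the proof.

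The main obstacle I anticipate is handling the multi-yolk fragments that exist only in the hexagonal grid. Unlike the X-pentomino, a maximal hexagonal fragment can have two or three mutually adjacent centers, each with its own spokes, so trimming can produce a more intricate remnant than simply a set of disjoint single cells. I would therefore enumerate the ways a multi-center maximal fragment can meet $B$ (where any subset of the centers, and any subset of the spokes attached to in-board centers, can be the in-board part), and verify in each case, using the regularity hypothesis, that the remnant is either already a valid fragment or can be completed to one (or split into two) without exceeding the original tile count. Once this case analysis is in place, the rest of the proof follows exactly the template of Theorem~\ref{thm:x=f}.
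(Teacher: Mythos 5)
Your proposal follows the same template the paper uses for Theorem~\ref{thm:x=f} (trim the tiles that poke out, repair any disconnection via the regularity hypothesis, then apply the analogues of Lemmas~\ref{lem:overlap} and~\ref{lem:isolate}), which is precisely the argument the paper intends for the hexagonal case, so the approach is essentially the same and is correct. One reassurance about the case analysis you defer to the end: the maximal hexagonal fragment has a unique center with six spokes (the multi-yolk shapes are all small), and regularity forces any two surviving spokes of an off-board center either to be adjacent or to have the intervening spoke also on the board while forbidding opposite spokes, so the trimmed tile is automatically a connected fragment or a single cell and no substitute center is ever needed.
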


The left example in Figure~\ref{fig:counter4} contains two marked
hexagonal cells that have exactly one common neighbor 
which does not belong to the board. The board can be covered with one 
fragment that pokes out, but it needs two fragments to be 
tiled without any poking out. The right example has two marked hexagonal 
cells that have two neighbors, neither of which belongs 
to the board. The right board can be covered 
with three fragments if poking out is allowed and four otherwise.

\begin{figure}[htpb]
\centering
\includegraphics[scale=0.5]{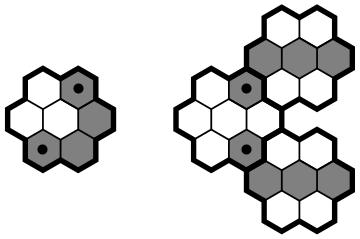}
\caption{Irregular Hexagonal Boards}
\label{fig:counter4}
\end{figure}

The natural lower bound for the size of a minimal fragment tiling of 
a triangular grid is the size of the board divided by 4.  This bound
is exact for triangular boards with even side length (since these 
can be tiled
with maximal fragments). 
For a hexagonal grid 
the natural lower bound is the size divided by 7.

\begin{theorem}
For a triangular board with side $n$ in a triangular grid 
the size of a minimal cover by maximal fragments is
$x(n) = \lceil n^2/4 \rceil$. 
\end{theorem}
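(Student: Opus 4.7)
The plan is to prove matching lower and upper bounds on $x(n)$. For the lower bound, each maximal fragment in the triangular grid consists of exactly four cells (a central triangle together with its three edge-neighbors), so a covering by $k$ fragments accounts for at most $4k$ cells, forcing $k \geq \lceil n^2/4 \rceil$.

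For the upper bound when $n$ is even, I would exhibit the standard subdivision of the side-$n$ triangle into $(n/2)^2 = n^2/4$ side-$2$ sub-triangles of alternating orientation. Each side-$2$ sub-triangle is itself a maximal fragment, and these sub-triangles tile the board disjointly, yielding an exact cover of the desired size.

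For the upper bound when $n$ is odd, $\lceil n^2/4 \rceil = (n^2+3)/4$, so three cell-slots in our fragments will necessarily be wasted (overlapping cells already covered or poking outside the board). My plan is to construct such a cover as follows: tile the side-$(n-1)$ sub-triangle at the top with $(n-1)^2/4$ maximal fragments (possible because $n-1$ is even), and then cover the bottom row of $2n-1$ cells using $(n+1)/2$ additional fragments. The fragment counts match: $(n-1)^2/4 + (n+1)/2 = (n^2+3)/4$. For the small cases $n\in\{3,5\}$ this works with all bottom-row fragments chosen to contribute three bottom-row cells and one cell of overlap or poke, giving three wasted slots in all.

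The main obstacle is the case $n\geq 7$. Since every maximal fragment is a side-$2$ triangle spanning two grid rows, it contributes at most three cells to the bottom row, so $(n+1)/2$ bottom-row fragments deliver only $3(n+1)/2$ bottom-row cell-slots, which is strictly less than $2n-1$ once $n\geq 7$. To overcome this, I would relax the clean separation into a top tiling and a bottom-row cover: replace a few fragments of the top tiling by down-pointing side-$2$ triangles whose apices lie in the bottom row (each contributing one bottom-row cell and three row-$1$ cells), and adjust the bottom-row fragments accordingly. A parity check on the counts of up- and down-oriented side-$2$ fragments (reproducing the required totals $u=\ldots$ and $d=\ldots$ dictated by the numbers of up- and down-triangles in the affected region) confirms that such a modified cover uses exactly $(n^2+3)/4$ maximal fragments for every odd $n\geq 3$.
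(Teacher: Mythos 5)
Your lower bound and your even case are fine and coincide with the paper's. The problem is the odd case: for odd $n\geq 7$ you do not actually have a proof, and you say so yourself. Your primary construction (exact tiling of the top side-$(n-1)$ triangle plus $(n+1)/2$ fragments for the bottom row) provably fails once $n\geq 7$, since $3(n+1)/2 < 2n-1$, and the proposed repair --- replacing some top-tiling fragments by down-pointing side-$2$ triangles with apex in the bottom row --- is left as a sketch with literal ellipses where the key quantities should be. Note also that each such replacement fragment contributes only \emph{one} cell to the bottom row, so it is not even clear that the deficit can be made up within the budget of $(n^2+3)/4$ fragments; and in any case a parity or counting check on the numbers of up- and down-oriented fragments cannot by itself establish the \emph{existence} of a geometric cover. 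As written, the theorem is verified only for $n\in\{3,5\}$ and even $n$.

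The paper avoids this difficulty with a cleaner decomposition that reduces the odd case directly to the even case. For $n=4m+1$ it places three side-$2m$ triangles at the corners and an almost-triangular central piece that becomes a side-$(2m+2)$ triangle after attaching $3$ cells (these are the $3$ wasted slots, realized as overlaps with the corner pieces); this gives $3m^2+(m+1)^2=\lceil n^2/4\rceil$ tiles. For $n=4m+3$ it places three side-$(2m+2)$ corner triangles overlapping in $3$ cells plus a side-$2m$ central triangle, giving $3(m+1)^2+m^2=\lceil n^2/4\rceil$. Each piece has even side, so your own even-case tiling finishes the argument. If you want to salvage your row-by-row approach you would need to exhibit an explicit cover of the bottom two rows for general odd $n$; switching to the four-even-triangles decomposition is much less work.
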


The sequence $x(n)$ arises in many other contexts. It appears in 
OEIS as sequence A004652.  However, our interpretation of the sequence is new.

\begin{proof}
It is obvious that
$\lceil n^2/4 \rceil$ is a lower bound since the board has
$n^2$ cells and each tile covers 4 cells.
When $n$ is even, the board is completely covered by tiles without overlap
in the obvious way. This gives
$x(n) = n^2/4 $.  Hence, we need only exhibit appropriate
coverings for boards with odd sizes.

\begin{figure}[htpb]
\centering
\includegraphics[scale=0.35]{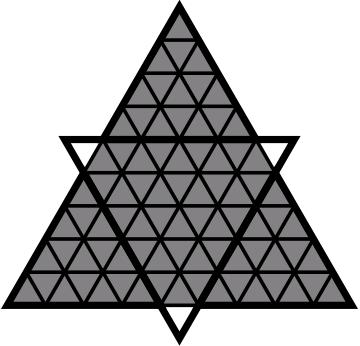}
\caption{Triangular Board with Side $4m + 1$} 
\label{fig:tri1}
\end{figure}

In the case where $n = 4m+1$,  
we partition the board into three triangles
with side $2m$ at the corners and a central piece that is almost a
triangle with side $2m+2$.  
(See Figure~\ref{fig:tri1}).
Indeed, if we 
attach 3 cells to the central piece it does become a triangle of 
size $2m+2$.  The three corner pieces and the enlarged center are
all triangles with even side.  They can therefore be covered using
a total of $3m^2 + (m + 1)^2 = 4m^2 + 2m + 1 = \lceil n^2/4\rceil$ tiles.

\begin{figure}[htpb]
\centering
\includegraphics[scale=0.35]{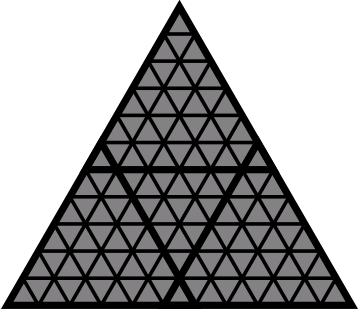}
\caption{Triangular Board with Side $4m + 3$} 
\label{fig:tri3}
\end{figure}

Similarly, when
$n$ has the form $4m+3$, we place triangular pieces with side
$2m + 2$ at the three corners of the board.  (These pieces overlap
at 3 cells.)  A triangular central piece with side $2m$ completes
the board.  
(See Figure~\ref{fig:tri3}).
The 4 pieces can be covered with
$3(m + 1)^2  + m^2 = 4m^2 + 6m + 3 = \lceil n^2/4\rceil$ tiles.
\end{proof}

\section{The Domination Number}\label{sec:dominum}

We can take this one step further and 
consider any connected graph. An edge (and its end points)
now play the role of a
\emph{domino}. The following analogue of Theorem \ref{thm:df} holds:

\begin{theorem}
The largest number of dominoes in a saturated domino covering of a connected
graph is equal to the number of vertices minus the minimum number 
of star graphs that can cover the graph.
\end{theorem}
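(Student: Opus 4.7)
The plan is to repeat the argument that proved Theorem~\ref{thm:df}, but without the geometric restriction that limited fragments to subshapes of the X-pentomino. Given a connected graph $G$ on $n$ vertices and a saturated domino covering $E' \subseteq E(G)$, I would form the spanning subgraph $(V(G),E')$ and argue, exactly as in Section~\ref{sec:crosses}, that each connected component of this subgraph must be a star: if some edge $uv \in E'$ had both $u$ and $v$ of degree at least $2$ in $(V(G),E')$, then removing $uv$ would leave every vertex still covered, contradicting saturation. Since the covering hits every vertex, the vertex sets of these star components form a partition of $V(G)$ into stars (with at least two vertices each, since $G$ is connected with no isolated vertices and every vertex of $G$ must sit inside some star of the covering).

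Next I would do the counting step. A star on $k$ vertices contributes $k-1$ edges, so summing over the $\phi$ star components gives $|E'| = n - \phi$. Therefore the size of a saturated covering equals $n$ minus the number of stars used, and maximising $|E'|$ is equivalent to minimising $\phi$. This gives the inequality that the maximal saturated covering has at most $n$ minus the minimum number of stars needed to partition $V(G)$.

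For the matching lower bound I would take any minimum partition of $V(G)$ into stars (using only edges of $G$) and let $E'$ consist of all edges of these stars. This $E'$ covers every vertex, and it is saturated: removing any edge $uv$ disconnects the leaf of its star from the center, and since the stars are vertex-disjoint no other edge of $E'$ is incident to that leaf, so the leaf becomes uncovered. Hence this $E'$ realises $n$ minus the minimum star-partition number, establishing equality.

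The only mildly delicate point is making sure the minimum ``star cover'' in the theorem is the same object as a minimum star partition of the vertex set. By Lemma~\ref{lem:overlap} applied in its abstract form, or simply by observing that overlapping stars can always be split at a shared or adjacent center, any covering of $V(G)$ by stars can be converted into a partition of $V(G)$ into stars without increasing the number of stars, so the two notions coincide. No new combinatorial obstacle arises; the earlier geometric constraints on fragment shape simply play no role in the graph-theoretic version of the argument.
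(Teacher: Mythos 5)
Your proof is correct and follows essentially the same route as the paper, which states this theorem as the direct graph-theoretic analogue of Theorem~\ref{thm:df} and relies on the Section~\ref{sec:crosses} argument (components of a saturated covering are stars, a star on $k$ vertices has $k-1$ edges, and overlapping stars can be disentangled as in Lemma~\ref{lem:overlap}). You have simply written out in full the details the paper leaves implicit, including the cover-versus-partition point, and done so correctly.
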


The minimum number of star graphs that can cover a graph $G$ is 
called the \emph{domination number} and is 
denoted $\gamma(G)$. This notion is well-studied, see \cite{hed}. 

Suppose we have a board $B$. The adjacency graph of
$B$ has vertices that correspond to cells of $B$. 
Two vertices are joined by an edge if 
they are adjacent on a board. We immediately obtain

\begin{theorem}
Let $B$ be a board with adjacency graph $Ad(B)$, 
then $d(B) = |B| - \gamma(Ad(B))$.
\end{theorem}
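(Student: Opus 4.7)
The plan is to deduce this directly from the preceding theorem on connected graphs, by identifying domino coverings of $B$ with edge coverings of the adjacency graph $Ad(B)$. First I would note the tautological bijection: cells of $B$ are vertices of $Ad(B)$, and placing a domino on two adjacent cells of $B$ is the same as choosing the corresponding edge of $Ad(B)$. Under this identification, a set $\mathcal{D}$ of dominoes covers every cell of $B$ if and only if the corresponding edge set has every vertex of $Ad(B)$ as an endpoint, and $\mathcal{D}$ is saturated (no removable domino) if and only if no single edge can be dropped while preserving this property. Consequently $d(B)$ is exactly the largest size of a saturated edge cover of $Ad(B)$, and the previous theorem at once gives
\[
d(B) \;=\; |V(Ad(B))| - \gamma(Ad(B)) \;=\; |B| - \gamma(Ad(B)).
\]

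The only small wrinkle, and the main point I would need to address carefully, is that the preceding theorem was stated for \emph{connected} graphs, whereas the standing hypothesis on boards guarantees only that $Ad(B)$ has no isolated vertex. This is not a real obstacle: I would apply the preceding theorem to each connected component of $Ad(B)$ separately, and then sum. The quantities $|B|$, $\gamma(Ad(B))$, and the size of a maximal saturated covering are all additive over connected components (a saturated covering of $B$ is obtained by independently choosing a saturated covering on each component, and a minimum star cover likewise decomposes), so the global identity follows immediately from the per-component identities.
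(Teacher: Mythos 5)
Your proposal is correct and follows essentially the same route as the paper, which simply asserts that the identity is ``immediately obtained'' from the preceding graph-theoretic theorem via the tautological identification of dominoes with edges of $Ad(B)$. Your extra step of handling the connectedness hypothesis by summing over components is a small piece of care that the paper omits but that genuinely belongs in a complete argument, since a board with no isolated cells need not be connected.
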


The adjacency graph of a rectangular $m \times n$ board on a square grid 
is known as the grid graph $G_{m,n}$.

\begin{corollary}
If $B$ is a rectangular $m \times n$ board, then $d(B) = |B| - \gamma(G_{m,n})$.
\end{corollary}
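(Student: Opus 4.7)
The plan is to observe that this corollary is essentially an immediate specialization of the theorem stated immediately above it, namely that $d(B) = |B| - \gamma(Ad(B))$ for any board $B$. First I would invoke this theorem for the rectangular $m \times n$ board $B$ under consideration. Then I would identify the adjacency graph: by the definition recalled in the sentence preceding the corollary, the adjacency graph $Ad(B)$ of a rectangular $m \times n$ board on the square grid is precisely the grid graph $G_{m,n}$. Substituting $Ad(B) = G_{m,n}$ into the formula from the theorem yields the desired identity $d(B) = |B| - \gamma(G_{m,n})$.

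The only subtlety worth checking is that the hypotheses required by the theorem are met in this setting. The theorem is stated for connected boards (we assumed throughout the paper that every cell is adjacent to some other cell, and the paper works with boards whose adjacency graphs admit saturated coverings). For a rectangular $m \times n$ board with $m, n \geq 1$ and $mn \geq 2$, the grid graph $G_{m,n}$ is connected, so there is nothing to verify beyond this standard observation. The $1 \times 1$ case is excluded by the standing assumption that every cell is adjacent to another.

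There is essentially no obstacle in this proof; the work was already done in establishing the general theorem for arbitrary boards. The corollary is a matter of recognizing the name $G_{m,n}$ for the adjacency graph of the rectangular board and substituting. Accordingly, the proof is only one or two sentences long, simply chaining the earlier theorem with the definition of $G_{m,n}$.
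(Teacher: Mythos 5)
Your proof is correct and matches the paper's (implicit) argument exactly: the corollary follows by specializing the preceding theorem $d(B) = |B| - \gamma(Ad(B))$ to a rectangular board and noting that $Ad(B)$ is by definition the grid graph $G_{m,n}$. The remark about connectedness and excluding the $1\times 1$ case is a reasonable, if unneeded, bit of care consistent with the paper's standing assumption on boards.
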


It was conjectured by Chang \cite{cha} and recently proved in \cite{gon} 
that for $16 \leq n \leq m$, the domination number of an $m \times n$
grid is: 
\begin{equation}
\gamma(G_{n,m}) = \lfloor \frac{(n+2)(m+2)}{5} \rfloor -4.
\end{equation}

For other rectangular grids the dominating number is also known \cite{ala}. 
Combining known results \cite{gon}, \cite{ala} gives the full completion 
A104519 of 
the sequence $x(n)$: 

\[
1, 2, 3, 4, 7, 10, 12, 16, 20, 24, 29, 35, 40, 47, 53, 60, 68, 76, 84, \ldots \]

The term $x(n)$ is given by $4\lfloor (n+2)^2/5 \rfloor$ for any $n \geq 6$ and $n \neq 13$. 

We briefly discuss the sequences that count
minimal fragment tilings of $m \times n$ rectangular 
boards for small values of $m$, or equivalently $n$. For 
simplicity, we will denote the board as $m \times n$.

Suppose $m = 1$. Then the best thing to do is to cover the board by 1 by 3 
rectangles and $f(1 \times n) = x(1 \times n) = \gamma (G_{1,n}) = \lceil n/3 \rceil$ (A008620). 
Correspondingly, $d(1 \times n) = n - \lceil n/3 \rceil$ (A004523).

When one of the dimensions is between 2 and 4 inclusive, we
show that $f(m \times n) = x (m \times n) = \gamma (G_{m,n}) \geq \frac{mn}{4}.$

Clearly a board of size  2 by $n$ cannot contain an X-pentomino, 
so the best we can do is to use T-tiles, and at least 
$\frac{mn}{4}$ are required. The explicit formula is known: 
$f(2 \times n) = f(2 \times n) = \gamma (G_{2,n})= \lfloor (n+2)/2 \rfloor$ (A008619). 
Correspondingly, $d(2 \times n) = 2n - \lfloor (n+2)/2 \rfloor$ (A001651).

Now, if a $3 \times  n$ board has a fragment tiling that includes an
X-pentomino, then that X-pentomino requires two different fragments to
the right of it. Moreover, they cannot both be T-tiles. Hence, 
we cannot do better then 4 squares per tile on average. 
The explicit formula is: $f(3 \times n) = f(3 \times n) = \gamma (G_{3,n})= \lfloor (3n+4)/4 \rfloor$ (A037915). 
Correspondingly, $d(3 \times n) = 3n - \lfloor (3n+4)/4 \rfloor$ 
(which was added as A193766).

A similar argument applies to the $4 \times n$ case.
Suppose there is an X-pentomino involved in a fragment tiling. 
Up to reflection, there is just
one way to place the X-pentomino. The cells adjacent 
to it cannot only belong to X-pentominoes 
and T-tiles. Hence, every X-pentomino 
requires at least one companion small fragment. 
If two X-pentominoes are joined they require 
two small fragments. Again we cannot exceed an average of
4 squares per tile. The explicit formula is: 
$f(4 \times n) = f(4 \times n) = \gamma (G_{4,n}) = n+1$, if $n = 1,2,3,5,6,9$, 
and $n$ otherwise (which was added as A193768). 
Correspondingly, $d(4 \times n) = 4n - 1 \text{ or } 4n$ 
(which was added as A193767).

We can easily see that for narrow $m \times n$ rectangles where 
the second dimension is large, we
can approach arbitrarily close to the bound $mn/4$ by truncating the
tessellations made by repeating the following patterns 
(See Figure~\ref{fig:longs}).

\begin{figure}[htpb]
\centering
\includegraphics[scale=0.5]{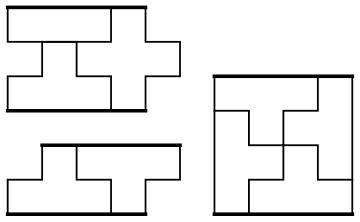}
\caption{Tessellations}
\label{fig:longs}
\end{figure}

\bigskip
\hrule
\bigskip

\noindent 2000 {\it Mathematics Subject Classification}: Primary
05C69, 
05A15, 
05B45, 
11B99, 
52C20. 

\noindent \emph{Keywords: domino coverings, dominating sets, sequences.} 

\bigskip
\hrule
\bigskip

\noindent
(Mentions A001651, A004532, A004652, A008619, A008620, A037915, 
A104519, A193764, A193765, A193766, A193767, A193768.)

\bigskip
\hrule
\bigskip

\end{document}